\newtheorem{thm}{Theorem}
\newtheorem*{thmun}{Theorem 3}
\newtheorem{prop}[thm]{Proposition}
\newtheorem{lem}[thm]{Lemma}
\theoremstyle{definition}
\theoremstyle{remark}
\newcommand{\BR}{\mathbb{R}}
\newcommand{\BZ}{\mathbb{Z}}
\begin{document}

\title{The Inhomogeneous Wave Equation with $L^p$ Data}

\author{Ben Foster}
\address{Department of Mathematics, University of Pennsylvania, 
Philadelphia, PA 19104}
\email{bfost@sas.upenn.edu}

\begin{abstract}
We prove existence and uniqueness of $L^2$ solutions to the inhomogeneous wave equation on $\BR^{n-1}\times\BR$ under the assumption that the inhomogeneous data lies in $L^p(\BR^n)$ for $p=2n/(n+4)$. We also require the Fourier transform of the inhomogeneous data to vanish on an infinite cone where the solution could become singular. Subsequently, we show sharpness of the exponent $p$. This extends work of Michael Goldberg, in which similar Fourier-analytic techniques were used to study the inhomogeneous Helmholtz equation.
\end{abstract}

 \maketitle
\section{Introduction}
We consider solutions to the inhomogeneous wave equation via Fourier-analytic methods
\begin{equation}
\left\{
\begin{array}{c l}	
     u_{tt}-\Delta_x u = f, & f\in L^p(\BR^{n})\\
     u\in L^2(\BR^n)
\end{array}\right.
\end{equation}
where we view $\BR^n=\BR^{n-1}\times \BR$ as having $n-1$ spatial dimensions. Taking the space-time Fourier transform, which we will denote throughout using the notation $\hat{u}$, we obtain a dual formulation
\begin{equation}
\left\{
\begin{array}{c l}	
     \hat{u}(\xi,\tau)= \dfrac{1}{4\pi^2}\dfrac{\hat{f}(\xi,\tau)}{|\xi|^2-\tau^2} \vspace{0.1cm}\\ 
     \hat{u}\in L^2(\BR^n)
\end{array}\right.
\end{equation}
Here and in the sequel, we use the following definition for the Fourier transform
\begin{equation}
\hat{u}(\xi,\tau)=\int_{\BR^n}u(x,t)e^{-2\pi i(x\cdot\xi+t\tau)}dxdt.
\end{equation}
From examining the formula for $\hat{u}$, a plausible strategy is to bound the $L^2$ norm of $u$ in terms of the $L^p$ norm of $f$. We notice immediately that if our solution is in $L^2$, then it is unique as a result of this formulation and Fourier inversion. The solution is at risk of blowing up along the infinite cone where $|\xi|=|\tau|$, so we will require that $\hat{f}$ vanishes in a suitable sense on this set. It is helpful to note that when studying the homogeneous version of this problem, taking the Fourier transform reveals that the solution $\hat{u}$ is supported on the infinite cone, in contrast to the situation in this problem. \\
A similar problem was studied by Michael Goldberg in \cite{MR3621102} with the PDE in question being the inhomogeneous Helmholtz equation. For that problem, the submanifold on which the Fourier transform of the solution needs to vanish is the unit sphere, which enjoys the nice properties of being convex, compact, and having nonvanishing curvature at every point. We utilize many of the techniques from his paper, although certain features of the wave equation require different techniques. Fortunately, the wave equation enjoys a scaling homogeneity in space and time; as a result, the problem is well suited to being studied on a single annulus in frequency space using a Littlewood-Paley decomposition and rescaling. \\
We prove two preliminary results before our main result. First is a lemma estimating the decay of the Fourier transform of surface measures for level sets of the function $g(\xi,\tau)=|\xi|^2-\tau^2$. These bounds are key in developing an $L^2$ bound for the Fourier transform of the solution localized to an annulus. After obtaining a suitable technical estimate on a single annulus in Proposition 2, we quickly conclude our main result:
\begin{thmun}
Suppose $f\in L^p(\BR^n)$ where $n\geq 5$ and $1< p\leq \frac{2n}{n+4}$ and that its Fourier transform $\hat{f}$ vanishes on the cone $\{(\xi,\tau)\in\BR^{n-1}\times\BR:|\xi|^2-\tau^2=0\}$. Then the inhomogeneous wave equation
\begin{equation} \label{Inhom Wave}
u_{tt}-\Delta_xu=f,
\end{equation}
where $\Delta_x$ denotes the Laplacian in the $n-1$ spatial variables, has a solution $u$ which is a tempered distribution whose Fourier transform coincides with a function, which is the unique solution with the property
\begin{equation}
    (-\Delta)^{\frac{1}{4}\left(n+4-\frac{2n}{p}\right)}u\in L^2(\BR^n),
\end{equation}
where $(-\Delta)^z$ denotes the Riesz potential. The Fourier transform of the Riesz potential of the solution is given by
\begin{equation} \label{Solution}
\left[(-\Delta)^{\frac{1}{4}\left(n+4-\frac{2n}{p}\right)}u\right]^{\land}(\xi,\tau)=(|\xi|^2+\tau^2)^{\frac{1}{4}\left(n+4-\frac{2n}{p}\right)}\frac{\hat{f}(\xi,\tau)}{|\xi|^2-\tau^2}.
\end{equation}
In particular, if $p=\frac{2n}{n+4}$ then $u\in L^2$.
\end{thmun}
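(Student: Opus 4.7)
The plan is to reduce the theorem to the single-annulus estimate of Proposition~2 via a Littlewood--Paley decomposition that exploits the scale invariance of the wave operator. Passing to Fourier space, the dual formulation forces $\hat u = \hat f/(4\pi^2(|\xi|^2-\tau^2))$, and the hypothesis that $\hat f$ vanishes on the cone $\{|\xi|=|\tau|\}$ is exactly what makes this quotient a locally integrable function rather than a distribution carrying mass on the cone. The theorem then reduces to proving
\[
\Bigl\| (|\xi|^2+\tau^2)^{s}\,\hat{f}/(|\xi|^2-\tau^2) \Bigr\|_{L^2(\BR^n)} \lesssim \|f\|_{L^p},\qquad s=\tfrac{1}{4}\bigl(n+4-\tfrac{2n}{p}\bigr),
\]
after which $u$ is recovered as a tempered distribution by Fourier inversion, and uniqueness in the stated class is automatic from this representation.

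I would then perform a dyadic decomposition, fixing a smooth partition of unity $\{\phi_j\}_{j\in\BZ}$ with each $\phi_j$ supported on the annulus $\{2^{j-1}\le|(\xi,\tau)|\le 2^{j+1}\}$, and writing $f_j=(\phi_j\hat f)^\vee$, $u_j=(\phi_j\hat u)^\vee$. The wave operator is homogeneous under the isotropic dilation $(x,t)\mapsto(\lambda x,\lambda t)$, so the rescaled function $\tilde f_j(x,t):=2^{-2j}f_j(2^{-j}x,2^{-j}t)$ has Fourier support in the unit annulus and corresponds to a rescaled solution $\tilde u_j$ of the same equation. Applying Proposition~2 to $\tilde f_j$ and tracking the $L^p$ and $L^2$ norms under the rescaling (using the identity $n/p-n/2-2=-2s$, which is precisely what selects the exponent $s$ in the theorem) yields $\|u_j\|_{L^2}\lesssim 2^{-2js}\|f_j\|_{L^p}$. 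Because the multiplier $(|\xi|^2+\tau^2)^s$ is comparable to $2^{2js}$ on the $j$th annulus, the weight cancels exactly, giving $\|(-\Delta)^s u_j\|_{L^2}\lesssim \|f_j\|_{L^p}$ uniformly in $j$.

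Finally I would reassemble: the $(-\Delta)^s u_j$ have disjoint Fourier supports and are therefore orthogonal in $L^2$, so
\[
\|(-\Delta)^s u\|_{L^2}^2 = \sum_j \|(-\Delta)^s u_j\|_{L^2}^2 \lesssim \sum_j \|f_j\|_{L^p}^2 \lesssim \|f\|_{L^p}^2,
\]
with the last inequality coming from Littlewood--Paley theory applied to the lacunary decomposition. The endpoint $p=2n/(n+4)$ is the case $s=0$ and delivers the clean $u\in L^2$ conclusion at once. In my view the main technical work of the paper lies not in this reassembly but in Proposition~2 itself: there one must parlay the surface-measure decay estimates from the preceding lemma into a uniform $L^p\to L^2$ bound on a single annulus, despite the non-compactness of the light cone and the fact that its Gaussian curvature vanishes along the rulings. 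One should also take care with the final summation in the regime $p<2$, since Minkowski's integral inequality runs in the wrong direction there and the reverse bound must be extracted from the frequency-disjointness of the $f_j$. Once Proposition~2 is in hand, however, the scaling argument above makes the passage to general frequency support essentially routine, as the introduction promises.
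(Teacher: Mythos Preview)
Your argument is correct and matches the paper's proof essentially step for step: rescale Proposition~2 to each dyadic annulus using the homogeneity of the wave operator, then reassemble via Littlewood--Paley. One small correction worth flagging: your concern about the final summation is misplaced, since for $p\le 2$ Minkowski's inequality runs in the \emph{right} direction, yielding $\bigl(\sum_j \|P_jf\|_{L^p}^2\bigr)^{1/2} \le \bigl\|\bigl(\sum_j |P_jf|^2\bigr)^{1/2}\bigr\|_{L^p}\lesssim\|f\|_{L^p}$ directly, with no need to invoke frequency disjointness of the $f_j$.
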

In this case, the definition we are using for the Riesz potential $(-\Delta)^{-z/2}$ where $z>0$ is given by
\begin{equation}
(-\Delta)^{-z/2}(u)(x,t)=\frac{1}{c_z}\int_{\BR^n}\frac{u(y,s)}{|(x,t)-(y,s)|^{n-z}}dyds,
\end{equation}
where $c_z$ is a constant depending on $z$ and $n$. In the theorem, the hypothesis that $1<p\leq\frac{2n}{n+4}$ will ensure that the convolution kernel of the Riesz potential is a locally integrable function. The Riesz potentials are used because their Fourier multipliers are powers of the function measuring distance from the origin; this makes them suitable in cancelling out factors that would keep the sum of the Littlewood-Paley projections from converging. \\
Finally, in Section 3 of the paper, we give an example to show that the estimate on an annulus in Proposition 2 is sharp in $p$. This comes down to a standard dilation argument. The same example can be used to show that the solution $u$ is generally not in $L^2$ unless $p=\frac{2n}{n+4}$. \\
Goldberg's work has recently found applications in \cite{MR3983048} and \cite{MR3949567}, which studied various Schr\"odinger operators given by the Laplacian with an additional potential. This suggests that there may be applications of our main result to wave operators with potentials. \\
An interesting feature of the argument we use is that we associate a partial differential operator with a submanifold of Euclidean space, in this case the cone, and in Goldberg's case, the sphere. More generally, we can associate many linear partial differential operators with constant coefficients to subsets of Euclidean space via a Fourier duality, i.e.
\begin{equation}
\sum_{|\alpha|\leq N}c_{\alpha}\partial_{\alpha}u=f \qquad \longleftrightarrow \qquad\left\{x\in \BR^n: \sum_{|\alpha|\leq N}c_{\alpha}(2\pi i x)^{\alpha}=0\right\}.
\end{equation}
It is not unreasonable to suspect that when these associated subsets are submanifolds, then data about their curvature, convexity, and compactness could be used to obtain analogous results as the ones in this paper for other partial differential equations.
\section*{Acknowledgements}
The author would like to thank Philip Gressman for introducing him to the problem and for many helpful discussions.
\section{Main Result}
In this section, we will prove the main theorem in three steps. First, we develop some notation that will be used throughout the proof. Since our primary goal is to establish boundedness of operators, we will use the notation $A\lesssim B$ to mean there exists some constant $c$ such that $A\leq cB$; this constant may depend on certain parameters (such as the dimension $n$) which will be specified. We will use the notation $A\approx B$ to mean that $A\lesssim B$ and $B\lesssim A$. \\
As we will be performing a Littlewood-Paley decomposition, let $\chi$ be a radial Schwartz function which is supported on the set $A=\{z\in\BR^n:1/3\leq |z|\leq 1\}$ and satisfies
\begin{equation}
    \sum_{j\in \BZ}|\chi(2^{-j}\xi,2^{-j}\tau)|^2=1,\qquad\text{for }(\xi,\tau)\in\BR^n\setminus\{0\}.
\end{equation}
Denoting $\chi_j(\xi,\tau)=\chi(2^{-j}\xi,2^{-j}\tau)$, let $P_j$ be the operator whose Fourier multiplier is $\chi_j$, i.e. $\widehat{P_jf}=\chi_j\hat{f}$. \\
The main technical estimate will be over the level sets of the function
\begin{equation}
    g(x,t)=|x|^2-t^2.
\end{equation}
Let $\sigma_s$ denote the canonical surface measure on the embedded hypersurface $g^{-1}(s)$; when $s=0$, we must technically delete the origin in order to have an embedded submanifold, but this will not affect the proof at all. We will need to smoothly localize these measures to slightly larger annuli, so we define $\eta$ to be a smooth radial bump function that is identically 1 on the support of $\chi$ and is supported on the annulus of points at distances $1/4$ to $5/4$ from the origin. We let $\eta_j(\xi,\tau)=\eta(2^{-j}\xi,2^{-j}\tau)$. We will denote our smoothly truncated measures by
\begin{equation}
\sigma_s^{(j)}=\eta_j\sigma_s.
\end{equation}
First, we prove a lemma showing that the Fourier transform of a localized conical measure decays in time sufficiently quickly. The general strategy here is to write the Fourier transform of the measure as an oscillatory integral whose phase function has a nondegenerate Hessian. Since the cone has a single vanishing principal curvature at every point, however, we must first use the coarea formula to reduce to a manifold one dimension lower on which the phase function has a nondegenerate Hessian.
\begin{lem}
The following bound holds uniformly across all $x\in\BR^n$ and across $s\in(-1/10,1/10)$:
\begin{equation}
|\check{\sigma}_s^{(0)}(x,t)|\lesssim(1+|t|)^{\frac{2-n}{2}}
\end{equation}
\end{lem}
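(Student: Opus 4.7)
The plan is to execute the coarea reduction foreshadowed immediately before the lemma: foliate the level set $g^{-1}(s)\cap\operatorname{supp}(\eta)$ by $\tau$-slices, each of which is an $(n-2)$-sphere on which the phase has nondegenerate Hessian, and then control the remaining one-dimensional $\tau$-integral by a combination of stationary-phase amplitude decay and integration by parts. Concretely, on each connected component I would parametrize by $(\omega,\tau)\mapsto(\sqrt{s+\tau^2}\,\omega,\tau)$ with $\omega\in S^{n-2}$ and $\tau$ in a compact interval. Bundling the coarea Jacobian together with the cutoff $\eta$ into a single smooth compactly supported amplitude $A(\tau)b(\omega,\tau)$ gives
\begin{equation*}
\check{\sigma}_s^{(0)}(x,t) = \int A(\tau)\, e^{2\pi i t\tau}\, F_\tau(x)\,d\tau, \qquad F_\tau(x) := \int_{S^{n-2}} b(\omega,\tau)\, e^{2\pi i \sqrt{s+\tau^2}\, x\cdot\omega}\,d\omega.
\end{equation*}

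Because $S^{n-2}$ has $n-2$ nonvanishing principal curvatures, standard stationary phase on the inner integral yields
\begin{equation*}
F_\tau(x) = \alpha_+(\tau,x)\, e^{2\pi i \sqrt{s+\tau^2}|x|} + \alpha_-(\tau,x)\, e^{-2\pi i \sqrt{s+\tau^2}|x|} + R_\tau(x),
\end{equation*}
where $R_\tau$ decays faster than any negative power of $|x|$, and the amplitudes $\alpha_\pm$ together with all of their $\tau$-derivatives are controlled by $(1+|x|)^{-(n-2)/2}$ uniformly in $s\in(-1/10,1/10)$ and in $\tau$ on the support. The lemma thereby reduces to estimating one-dimensional oscillatory integrals $J_\pm(x,t) = \int A(\tau)\alpha_\pm(\tau,x)\, e^{2\pi i \psi_\pm(\tau)}\,d\tau$ with phases $\psi_\pm(\tau) = t\tau \pm \sqrt{s+\tau^2}\,|x|$.

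A simple dichotomy in the relative sizes of $|x|$ and $|t|$ then closes the argument. If $|x|\geq|t|/2$, the amplitude bound alone gives $|J_\pm|\lesssim(1+|x|)^{-(n-2)/2}\lesssim(1+|t|)^{-(n-2)/2}$. If $|t|>2|x|$, the phase derivative $\psi_\pm'(\tau) = t\pm\tau|x|/\sqrt{s+\tau^2}$ satisfies $|\psi_\pm'|\gtrsim|t|$ uniformly on the support and higher derivatives of $\psi_\pm$ remain bounded, so repeated integration by parts yields $|J_\pm|\lesssim(1+|x|)^{-(n-2)/2}\,|t|^{-N}$ for any $N$; choosing $N\geq(n-2)/2$ and combining with the trivial total-mass estimate $|\check{\sigma}_s^{(0)}|\lesssim 1$ for bounded $|t|$ produces the claimed decay rate.

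The primary technical step will be the uniform symbol control on $\alpha_\pm$ with respect to $\tau$ and $s$, which should follow from the Bessel-function representation of $\widehat{d\sigma_{S^{n-2}}}$ combined with careful chain-rule bookkeeping (the key observation being that separating $e^{\pm 2\pi i \sqrt{s+\tau^2}|x|}$ into the phase $\psi_\pm$ rather than the amplitude prevents $\tau$-differentiation of $\alpha_\pm$ from introducing additional factors of $|x|$). A minor secondary issue is the hyperboloid neck when $s$ is negative enough that $\xi=0$ lies on the support of $\eta$: there the $\omega$-parametrization degenerates, and one patches using a local graph parametrization $\tau = \pm\sqrt{s+|\xi|^2}$, on which all $n-1$ principal curvatures are nonzero and full-rank stationary phase gives an even stronger bound than needed.
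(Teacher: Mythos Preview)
Your argument is correct, but it takes a genuinely different route from the paper. The paper first uses rotational symmetry in the spatial variables to write $x=\lambda e_1$, parametrizes $g^{-1}(s)$ as a graph $\tau=h_s(\xi)=\sqrt{|\xi|^2-s}$, and then foliates by hyperplanes $\{\xi_1=a\}$; on each such slice the phase of the inner $(n-2)$-dimensional integral is $t\,h_s(a,\xi')$, which has nondegenerate Hessian at the unique critical point $\xi'=0$, so stationary phase gives the factor $(1+|t|)^{(2-n)/2}$ directly, and the remaining one-dimensional integral in $a$ is disposed of by the triangle inequality with no decay needed. In other words, the paper arranges for the large parameter in the inner stationary-phase step to be $t$ itself, so no conversion from $|x|$-decay to $|t|$-decay is necessary, no symbol tracking on $\alpha_\pm$ is required, and there is no neck issue to patch since the graph parametrization over $\xi$ is uniformly nondegenerate on $\operatorname{supp}(\eta)$. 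Your $\tau$-foliation is the more ``geometric'' choice and is closer in spirit to standard treatments of Fourier restriction to the cone; it works, but at the cost of the Bessel asymptotics, the $|x|\gtrless|t|$ case split with integration by parts, and a separate argument near the hyperboloid neck. The paper's choice of slicing in the direction of $x$ rather than in $\tau$ is the main simplification you are missing.
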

\begin{proof}
Since the function is radial, we can without loss of generality write $x=\lambda e_1$, where $e_1=(1,0,...,0)$. By linearity, it suffices to work with the positive time branch of the surface. Here, we have the graph parametrization for $g^{-1}(s)$ given by
\begin{equation}
\Phi_s(\xi_1,...,\xi_{n-1})=(\xi_1,...,\xi_{n-1},\sqrt{|\xi|^2-s}).
\end{equation}
When pulling back the surface integral to Euclidean space, we will get a uniformly bounded factor from the derivative of the graph parametrization that we can absorb as a constant. We let $h_s(\xi)=\sqrt{|\xi|^2-s}$ and let $k_s(\xi)$ for the derivative factor of $h_s$ accrued when pulling back the surface integral to Euclidean space. Using the notation $\xi'=(\xi_2,...,\xi_{n-1})$, we compute:
\begin{align}
\check{\sigma}_s^{(0)}(x,t)&=\int_{\BR^{n-1}}e^{2\pi i(\lambda \xi_1+th_s(\xi))}k_s(\xi)\eta(\xi,h_s(\xi))d\xi \\
&=\int_{-1}^1e^{2\pi i\lambda a}\left(\int_{\{\xi_1=a\}}e^{2\pi ith_s(a,\xi')}k_s(a,\xi')\eta(a,\xi',h_s(a,\xi'))d\xi'\right)da
\end{align}
The inner integral is an oscillatory integral with phase function $h_s$. This has derivative (with respect to the $\xi'$ variables) $\xi'(a^2+|\xi'|^2-s)^{-1/2}$ which is nonvanishing except when $\xi'=0$. We check that the Hessian is nondegenerate at such points:
\begin{align}
\partial_i^2h_s(a,\xi')&=\frac{\sqrt{a^2+|\xi'|^2-s}-\xi_i^2/\sqrt{a^2+|\xi'|^2-s}}{a^2+|\xi'|^2-s} \\
\partial_i\partial_jh_s(a,\xi')&=-\frac{\xi_i\xi_j}{(a^2+|\xi'|^2-s)^{3/2}}
\end{align}
Thus, evaluating at $\xi'=0$ we see that the Hessian is nondegenerate provided $a^2\neq s$; however, if $a^2=s$ and $\xi'=0$ then we are outside the support of $\eta$, so there is no issue. Using stationary phase techniques described in Theorem 1 of Chapter 8.3 of \cite{MR1232192}, we conclude that
\begin{equation}
\left|\int_{\{\xi_1=a\}}e^{2\pi it\sqrt{a^2+|\xi'|^2-s}}\eta(a,\xi',\sqrt{a^2+|\xi'|^2-s})d\xi'\right|\lesssim(1+|t|)^{(2-n)/2}.
\end{equation}
Thus, by the triangle inequality, we conclude the claim.
\end{proof}
With this estimate established, we are almost ready to prove the main technical estimate. It is not hard to see that it is necessary that $\hat{f}=0$ on the set where $|\xi|=|\tau|$; if $\hat{f}$ is a continuous function that does not have this property, then the integral
\begin{equation}
    \int\left|\frac{1}{|\xi|^2-\tau^2}\right|d\xi\,d\tau
\end{equation}
can be seen to diverge when the integral is taken over a neighborhood of the point on the set $|\xi|=|\tau|$ with the property that $\hat{f}(\xi,\tau)\neq 0$. One easy choice for such a neighborhood would be a thickened sector of a spherical shell; then we would have
\begin{equation}
    \int_S\left|\frac{1}{|\xi|^2-\tau^2}\right|d\xi\,d\tau=C_n\int_{r_0-\delta}^{r_0+\delta}\int_{r-\epsilon}^{r+\epsilon}\left|\frac{1}{r^2-t^2}\right|dtdr\rightarrow \infty
\end{equation}
so a vanishing condition is necessary. \\
The last remaining task is to clarify what it means for $\hat{f}$ to vanish on the cone $g^{-1}(0)$ for arbitrary $f\in L^p(\BR^n)$. The natural way to do this is by density. We say that $\hat{f}$ vanishes on the cone if there exists a sequence of functions $\{f_k\}\subset L^1(\BR^n)\cap L^p(\BR^n)$ which converges to $f$ with respect to the $L^p$ norm and satisfies that $\hat{f_k}=0$ on the set $\{(\xi,\tau)\in \BR^n:|\xi|^2=\tau^2\}$. This is unambiguous since each $\hat{f_k}$ is continuous. For simplicity, we will assume in the proof of the following proposition that $f$ is actually an element of the subspace of functions in $L^1\cap L^p$ whose continuous Fourier transform vanishes on the cone. In fact, when we prove boundedness on this subspace, we will obtain a unique bounded extension to the completion of this subspace with respect to the $L^p$ norm; we will consider elements in this completion to vanish on the cone as well.
\begin{prop}
Suppose $f\in L^p(\BR^n)$ where $1< p\leq \frac{2n}{n+4}$ and that its Fourier transform $\hat{f}$ vanishes on the cone $g^{-1}(0)$. Then we have the estimate
\begin{equation} \label{Complete estimate on annulus}
    \Vert P_0u\Vert_{L^2(\BR^n)}\lesssim \Vert P_0 f\Vert_{L^p(\BR^n)}
\end{equation}
where $P_0u$ has its Fourier transform given by
\begin{equation}
\widehat{P_0u}(\xi,\tau)=\frac{\hat{f}(\xi,\tau)}{|\xi|^2-\tau^2}\chi(\xi,\tau).
\end{equation}
\end{prop}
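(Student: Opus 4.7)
The plan is to combine Plancherel's identity with the coarea formula to convert $\|P_0 u\|_{L^2}^2$ into an integral over the level sets of $g(\xi,\tau) = |\xi|^2 - \tau^2$, then apply a Stein--Tomas-type restriction estimate on each level set driven by Lemma 1, and finally exploit the hypothesis that $\hat f$ vanishes on $g^{-1}(0)$ to supply the cancellation needed to offset the singularity $1/s^2$.

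Concretely, by Plancherel, $\|P_0 u\|_{L^2}^2 = \int \chi^2 |\hat f|^2/(|\xi|^2 - \tau^2)^2\, d\xi\, d\tau$. Since $|\nabla g|$ is bounded above and below on the support of $\chi$, the coarea formula yields
\begin{equation*}
\|P_0 u\|_{L^2}^2 \;\approx\; \int_{-1}^{1} \frac{B(s)}{s^2}\, ds, \qquad B(s) := \int \chi^2 |\hat f|^2\, d\sigma_s^{(0)},
\end{equation*}
the integration being restricted to $|s|\le 1$ since $g^{-1}(s)$ misses the support of $\chi$ for larger $|s|$. The task then reduces to bounding $\int B(s)/s^2\, ds \lesssim \|P_0 f\|_p^2$. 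For the $s$-uniform part of the estimate I would run the standard $T T^*$ argument: $B(s) = \langle f, f \ast \check{\tilde\sigma}_s\rangle$ with $\tilde\sigma_s = \chi^2 \sigma_s^{(0)}$, and H\"older together with Young's inequality reduces the bound to controlling $\|\check{\tilde\sigma}_s\|_{L^r}$ for the appropriate dual exponent. Lemma 1, combined with a parallel estimate yielding decay of $\check{\tilde\sigma}_s$ in the spatial direction (either from an analogous stationary-phase analysis, or from the frequency-side compactness of $\tilde\sigma_s$ after smoothing), supplies the needed kernel bound and produces a uniform Stein--Tomas estimate $B(s) \lesssim \|P_0 f\|_{p_{ST}}^2$ with $p_{ST} = 2n/(n+2)$.

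The uniform bound alone is insufficient since $\int_{-1}^{1} ds/s^2$ diverges, so the vanishing condition must contribute. Working on the dense subspace $L^1 \cap L^p$ where $\hat f$ is continuous and vanishes on $g^{-1}(0)$, and further approximating by Schwartz functions whose Fourier transforms vanish on the cone to justify pointwise differentiation, I would parametrize the surfaces $g^{-1}(s)$ near the annulus by a normal flow $\Phi_s \colon g^{-1}(0) \to g^{-1}(s)$ and write
\begin{equation*}
\hat f(\Phi_s(\omega)) \;=\; \int_0^s \partial_r \hat f(\Phi_r(\omega))\, dr.
\end{equation*}
Cauchy--Schwarz in $r$ followed by Hardy's inequality in $s$ then converts $\int B(s)/s^2\, ds$ into an $L^2$ estimate for the normal derivative of $\hat f$ on a neighborhood of the cone, which after interpolation with the Stein--Tomas bound of the previous step is controlled by $\|P_0 f\|_p$ at the Sobolev-critical exponent $p = 2n/(n+4)$.

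The main obstacle will be this last step: $L^p$ data do not directly supply Sobolev regularity for $\hat f$, so the Hardy-type bound must be coordinated with the Stein--Tomas estimate in such a way as to shift exactly one normal derivative while passing from $p_{ST} = 2n/(n+2)$ to $p = 2n/(n+4)$. The target exponent $2n/(n+4)$ is precisely the Hardy--Littlewood--Sobolev critical index for the inverse of a second-order operator, a rigid value which the argument closes on only because both ingredients align at exactly this $p$; for larger $p$ the argument would fail, consistent with the sharpness example in Section~3.
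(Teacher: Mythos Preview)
Your coarea reduction and the uniform Stein--Tomas bound $B(s)\lesssim\|P_0 f\|_{p_{ST}}^2$ with $p_{ST}=2n/(n+2)$ are both correct and match the paper in spirit, but the step where you absorb the $1/s^2$ singularity does not close. Writing $\hat f(\Phi_s(\omega))=\int_0^s\partial_r\hat f(\Phi_r(\omega))\,dr$ and applying Cauchy--Schwarz plus Hardy leads, as you yourself observe, to a bound by $\|\partial_\nu\hat f\|_{L^2(L)}^2\approx\||(x,t)|\,f\|_{L^2}^2$, which is not controlled by $\|f\|_{L^p}$ for any $p$. The phrase ``after interpolation with the Stein--Tomas bound'' is where the argument becomes a wish rather than a proof: you have one estimate that is pointwise in $s$ on an $L^{p_{ST}}$ scale and another that is integrated in $s$ on a weighted-$L^2$ scale, and no standard interpolation theorem bridges these into $\int B(s)/s^2\,ds\lesssim\|f\|_{2n/(n+4)}^2$. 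The two inputs live in incompatible families.

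The paper resolves this by placing the cancellation in the \emph{kernel} rather than in $\hat f$. One sets
\[
K_0^\epsilon=\tfrac12\int_{-\delta}^\delta\frac{\check\sigma_s^{(0)}-2\check\sigma_0^{(0)}+\check\sigma_{-s}^{(0)}}{s^2+\epsilon^2}\,ds,
\]
so that $\langle K_0^\epsilon*P_0f,P_0f\rangle$ reproduces the coarea integral; the vanishing hypothesis on $\hat f$ is used \emph{only} to insert the $-2\check\sigma_0^{(0)}$ term for free. Two applications of the mean value theorem to the second difference yield $|\check\sigma_s^{(0)}-2\check\sigma_0^{(0)}+\check\sigma_{-s}^{(0)}|\lesssim s^2(1+|t|)^{(2-n)/2+2}$, which cancels $1/s^2$ exactly and, after splitting the $s$-integral at $|s|=(1+|t|)^{-1}$, leaves a kernel with pointwise decay $(1+|t|)^{(4-n)/2}$ uniformly in $x$. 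No regularity of $\hat f$ is ever invoked. The $(p,p')$ bound on convolution with $K_0^\epsilon$ is then obtained by treating space and time asymmetrically: one bounds the spatial partial convolution $T_t^{(\epsilon)}g=K_0^\epsilon(\cdot,t)*_x g$ by interpolating a $(1,\infty)$ bound from the pointwise decay against a $(2,2)$ bound from an explicit spatial Fourier computation, and then applies Hardy--Littlewood--Sobolev in the \emph{time} variable alone. It is this fractional-integration step in $t$, not an interpolation against a derivative estimate for $\hat f$, that lands exactly on $p=2n/(n+4)$.
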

\begin{proof}
By Plancherel, it suffices to estimate $\Vert \widehat{P_0 u}\Vert_{L^2}$. We will split this integral over the level sets of $g$ which are close to the cone and far from the cone. Define $L=g^{-1}(-\delta,\delta)\cap A$ and fix $\delta\leq1/10$. The motivation for this is that for $|x|\geq 1/3$ we have that $|x|^2-s\geq 1/90>0$ which will make subsequent functions related to the graph parametrizations of $g^{-1}(s)$ smooth up to the boundary of the level sets on the annulus. Away from the cone, we can use the coarea formula to estimate
\begin{align}
\Vert \widehat{P_0u}\Vert_{L^2(A\setminus L)}^2&=\int_{A\setminus L}\frac{|\widehat{P_0f}(\xi,\tau)|^2}{(|\xi|^2-\tau^2)^2}d\xi d\tau\\
&\lesssim \int_{s=-\delta}^{\delta}\int_{g^{-1}(s)}\frac{|\widehat{P_0f}(\xi,\tau)|^2}{s^2}d\sigma_s(\xi,\tau)ds \\
&\lesssim \Vert \widehat{P_0 f}\Vert_{L^2(A\setminus L)}^2\\
&\lesssim \vert \widehat{P_0 f}\Vert_{L^{p'}(A\setminus L)}^2
\end{align}
where we used the fact that the denominator gives a factor of at most $\delta^{-2}$. As a consequence, we get the desired estimate on $A\setminus L$
\begin{equation}\label{Bound away from cone}
\Vert \widehat{P_0u}\Vert_{L^2(A\setminus L)}^2\lesssim \vert P_0 f\Vert_{L^{p}(A\setminus L)}^2.
\end{equation}
The more difficult problem is to estimate the integral over the level sets close to the cone. We will view the problem within the framework of convolution with a kernel. Throughout, it will suffice to work only on the positive-time branch of the cone and thus on the positive time half of the annulus; this is because the same methods can be used to derive an analogous estimate for the negative-time branch, contributing no more than a factor of 2. Define the family of kernels $K_0^{\epsilon}(x,t)$ (suppressing $(x,t)$ for succinctness) by
\begin{equation}\label{Kernel definition}
K_0^{\epsilon}=\int_{-\delta}^{\delta}\frac{(\sigma_s^{(0)})^{\lor}-(\sigma_0^{(0)})^{\lor}}{s^2+\epsilon^2}ds=\frac{1}{2}\int_{-\delta}^{\delta}\frac{(\sigma_s^{(0)})^{\lor}-2(\sigma_0^{(0)})^{\lor}+(\sigma_{-s}^{(0)})^{\lor}}{s^2+\epsilon^2}ds
\end{equation}
where the last equality follows by changing variables and averaging. This is defined for $\epsilon>0$ so that $K_0^{\epsilon}$ is always a tempered distribution. For estimation purposes, we will use the last definition of $K_0^{\epsilon}$ given in (\ref{Kernel definition}) to exploit symmetry. The motivation for this definition is the observation that 
\begin{equation}\label{Convolution form}
\langle K_0^{\epsilon}*P_0f,P_0f\rangle=\langle \widehat{K_0^{\epsilon}}\widehat{P_0f},\widehat{P_0f}\rangle=\int_{s=-\delta}^{\delta}\int_{g^{-1}(s)}\frac{|\widehat{P_0f}(\xi,\tau)|^2}{s^2+\epsilon^2}d\sigma_s^{(0)}(\xi,\tau)ds
\end{equation}
where we used the fact that $\hat{f}$ vanishes on the cone, resulting in no contribution from the $d\sigma_0$ measure. We were able to replace $d\sigma_s$ by $d\sigma_s^{(0)}$ because these measures are the same on the support of $\widehat{P_0f}$. By the coarea formula, we get the estimate
\begin{equation}\label{Convolution form estimate}
\langle K_0^{\epsilon}*P_0f,P_0f\rangle\approx\int_{L}\frac{|\widehat{P_0f}(\xi,\tau)|^2}{s^2+\epsilon^2}d\xi\,d\tau
\end{equation}
so we can estimate the integral of $u$ over $L$ by getting a $(p,p')$ estimate for the operator given by convolution with $K_0^{\epsilon}$. Now, we will show that we have a decay estimate for $K_0^{\epsilon}$ in the time variable which is uniform in $x$. From Lemma 1, we have the estimate
\begin{equation}\label{FT Measure estimate}
|\check{\sigma}_s^{(0)}(x,t)|\lesssim (1+|t|)^{\frac{2-n}{2}}.
\end{equation}
We can also differentiate under the integral sign in $s$ since the bump function $\eta$ cuts out all the singularities; this gives the estimate
\begin{equation}\label{FT Derivative Measure estimate}
|\partial_s^2\check{\sigma}_s^{(0)}(x,t)|\lesssim (1+|t|)^{\frac{2-n}{2}+2}.
\end{equation}
Applying the mean value theorem twice, we thus obtain the bound
\begin{equation}\label{Kernel numerator MVT estimate}
|(\check{\sigma}_s^{(0)}-2\check{\sigma}_0^{(0)}+\check{\sigma}_{-s}^{(0)})(x,t)|\lesssim (1+|t|)^{\frac{2-n}{2}+2}s^2.
\end{equation}
Combining (\ref{FT Measure estimate}) and (\ref{Kernel numerator MVT estimate}) we get that
\begin{equation}
|(\check{\sigma}_s^{(0)}-2\check{\sigma}_0^{(0)}+\check{\sigma}_{-s}^{(0)})(x,t)|\lesssim (1+|t|)^{\frac{2-n}{2}}\min\big((1+|t|)^2s^2,1\big).
\end{equation}
Splitting the integral around $|s|=(1+|t|)^{-1}:=b_t$, we conclude
\begin{align}
|K_0^{\epsilon}(x,t)|&\lesssim(1+|t|)^{\frac{2-n}{2}+2}\int\limits_{|s|<b_t}ds+(1+|t|)^{\frac{2-n}{2}}\int\limits_{\delta>|s|>b_t}\frac{ds}{s^2} \\
&\lesssim (1+|t|)^{\frac{2-n}{2}+1}\label{1,inf partial estimate}
\end{align}
with a constant that is uniform in $\epsilon$ and in $x$. \\
Define a family of partial-convolution operators $T_t^{(\epsilon)}$ for $g\in L^1(\BR^{n-1})+L^2(\BR^{n-1})$ by
\begin{equation}
T_t^{(\epsilon)}(g)(x)=\int_{\BR^{n-1}}K_0^{\epsilon}(x-y,t)g(y)dy.
\end{equation}
The motivation of this definition is that if we can first get a $(p,p')$ estimate for each $T_t^{(\epsilon)}$ operator, then we can extend this to a $(p,p')$ estimate for the operator given by convolution with $K_0^{\epsilon}$ via currying out the time component and using fractional integration. \\
One of the easiest ways to get a $(p,p')$ estimate is to interpolate between a $(1,\infty)$ estimate and a $(2,2)$ estimate. The previous estimate (\ref{1,inf partial estimate}) immediately allows us to conclude
\begin{equation}\label{(1,infty) bound}
\Vert T_t^{(\epsilon)}(g)\Vert_{L^{\infty}(\BR^{n-1})}\lesssim (1+|t|)^{\frac{2-n}{2}+1}\Vert g\Vert_{L^1(\BR^{n-1})}.
\end{equation}
We now search for a $(2,2)$ estimate for $T_t^{(\epsilon)}$ in order to interpolate and obtain a $(p,p')$ estimate. We use the fact that since $T_t^{(\epsilon)}$ is of convolution type, specifically with $K_0^{\epsilon}(\cdot,t)$, then
\begin{equation}
\Vert T_t^{(\epsilon)}\Vert_{L^2(\BR^{n-1})\rightarrow L^2(\BR^{n-1})}=\Vert\mathcal{F}_x[K_j^{\epsilon}](\cdot,t)\Vert_{L^{\infty}(\BR^{n-1})}
\end{equation}
where $\mathcal{F}_x$ denotes the Fourier transform in the spatial variables only. By linearity, we will instead obtain a bound on
\begin{equation}
\mathcal{F}_x[\check{\sigma}_s(\cdot,t)](\xi)=\int_{\BR^{n-1}}e^{-2\pi ix\cdot \xi}\check{\sigma}_s^{(0)}(x,t)\,dx
\end{equation}
for fixed $t$. We proceed by a calculation, recalling the notation $h_s(u)=\sqrt{|u|^2-s}$ and $k_s(u)$ as the derivative factor from pulling back to Euclidean space.
\begin{align}
\check{\sigma}_s(x,t)&=\int_{g^{-1}(s)}\eta(u,\tau)e^{2\pi i(u\cdot x+\tau t)}d\sigma_s(u,\tau) \\
&=\int_{\BR^{n-1}}e^{2\pi i th_s(u)}k_s(u)\eta(u,h_s(u))e^{2\pi iu\cdot x}du \\
&=\mathcal{F}_x^{-1}[e^{2\pi i th_s(\cdot)}k_s(\cdot)\eta(\cdot,h_s(\cdot))](x)
\end{align}
Thus, by Fourier inversion, we have that
\begin{equation}
\mathcal{F}_x[\check{\sigma}_s(\cdot,t)](\xi)=e^{2\pi i th_s(\xi)}k_s(\xi)\eta(\xi,h_s(\xi))
\end{equation}
which is uniformly bounded in $\xi,t$. We can differentiate in $s$ to calculate that
\begin{equation}
\left|\partial_s^2\int_{\BR^{n-1}}e^{-2\pi i\xi\cdot x}\check{\sigma}_s^{(0)}dx\right|\lesssim (1+|t|)^2.
\end{equation}
As before, we use the mean value theorem with the previous to estimates to immediately derive the bound
\begin{equation}
\left|\int_{\BR^{n-1}}e^{-2\pi i\xi\cdot x}(\check{\sigma}_s^{(0)}-2\check{\sigma}_0^{(0)}+\check{\sigma}_{-s}^{(0)})dx \right|\lesssim \min\big((1+|t|)^2s^2,1\big).
\end{equation}
Since all of these estimates have been uniform in $\xi$, we can make an analogous argument and calculation as the one used in the $(1,\infty)$ estimate. Letting $c_t\approx (1+|t|)^{-1}$, we have
\begin{align}
|\mathcal{F}_x[K_j^{\epsilon}](\xi,t)|&\lesssim(1+|t|)^{2}\int\limits_{|s|<c_t}ds+\int\limits_{\delta>|s|>c_t}\frac{ds}{s^2} \\
&\lesssim (1+|t|)\label{1,inf partial estimate}.
\end{align}
Hence, we have the $(2,2)$ estimate
\begin{equation} \label{(2,2) bound}
\Vert T_t^{(\epsilon)}\Vert_{L^2(\BR^{n-1})\rightarrow L^2(\BR^{n-1})}\lesssim (1+|t|).
\end{equation}
Interpolating between (\ref{(1,infty) bound}) and (\ref{(2,2) bound}), we get that for $1\leq p\leq 2$
\begin{equation}\label{(p,p') bound}
\Vert T_t^{(\epsilon)}\Vert_{L^p\rightarrow L^{p'}}\lesssim (1+|t|)^{\frac{2-n}{2}\left(\frac{2}{p}-1\right)+1}.
\end{equation}
Now, we will use this to show $(p,p')$ boundedness of the operator given by convolution with $K_0^{\epsilon}$ on $\BR^n$. Denote $k_{np}=\frac{2-n}{2}\left(\frac{2}{p}-1\right)+1$. We have
\begin{align}
\Vert K_0^{\epsilon}* P_0f\Vert_{L^{p'}(\BR^n)}&=\left\Vert\left\Vert\int_{v\in\BR} T_{t-v}(P_0f)(\cdot)dv\right\Vert_{L^{p'}(\BR^{n-1})}\right\Vert_{L^{p'}(\BR)} \\
&\lesssim \left\Vert\int_{v\in\BR}\left\Vert T_{t-v}(P_0f)(\cdot)\right\Vert_{L^{p'}(\BR^{n-1})}dv\right\Vert_{L^{p'}(\BR)} \\
&\lesssim \left\Vert\int_{v\in\BR} (1+|t-v|)^{k_{np}}\Vert P_0f(\cdot,v)\Vert_{L^p(\BR^{n-1})}dv\right\Vert_{L^{p'}(\BR)} \\
&\lesssim \left\Vert\int_{v\in\BR} \frac{1}{(1+|t-v|)^{2-\frac{2}{p}}}\Vert P_0f(\cdot,v)\Vert_{L^p(\BR^{n-1})}dv\right\Vert_{L^{p'}(\BR)} \\
&\lesssim \left\Vert (-\Delta)^{-\frac{1}{2}\left(\frac{2}{p}-1\right)}\left(\Vert P_0f(\cdot,t)\Vert_{L^p(\BR^{n-1})}\right)\right\Vert_{L^{p'}(\BR)}
\end{align}
where the quantity on the last line inside the $p'$ norm is the Riesz potential on the real line. Here, we used the fact that since $p\leq \frac{2n}{n+4}$, it follows that $1+\frac{2-n}{2}\left(\frac{2}{p}-1\right)\leq \frac{2}{p}-2$. Now, we can apply Hardy-Littlewood-Sobolev fractional integration because of the relation $\frac{1}{p'}=\frac{1}{p}-\left(\frac{2}{p}-1\right)$. We conclude that
\begin{equation}
\Vert K_0^{\epsilon}*P_0f\Vert_{L^{p'}(\BR^n)}\lesssim \Vert P_0f\Vert_{L^p(\BR^n)}.
\end{equation}
Using H\"older's inequality, we obtain the bound
\begin{equation} \label{Holder estimate}
\langle K_0^{\epsilon}*P_0f,P_0f\rangle \lesssim \Vert K_0^{\epsilon}*P_0f\Vert_{L^{p'}(\BR^n)}\Vert P_0f\Vert_{L^p(\BR^n)}\lesssim \Vert P_0f\Vert_{L^p(\BR^n)}^2.
\end{equation}
Thus, combining  (\ref{Convolution form}) and (\ref{Holder estimate}), we find that
\begin{equation} \label{Weak bound near cone}
\int_{L}\frac{|\widehat{P_0f}(\xi,\tau)|^2}{(|\xi|^2-\tau^2)^2+\epsilon^2}d\xi\,d\tau \lesssim \Vert P_0f\Vert_{L^{p}(\BR^n)}^2
\end{equation}
with an implied constant that is uniform in $\epsilon$; consequently, the monotone convergence theorem implies that
\begin{equation} \label{Bound near cone}
\int_{L}\frac{|\widehat{P_0f}(\xi,\tau)|^2}{(|\xi|^2-\tau^2)^2}d\xi\,d\tau \lesssim \Vert P_0f\Vert_{L^{p}(\BR^n)}^2.
\end{equation}
Using (\ref{Bound away from cone}) with (\ref{Bound near cone}), we conclude
\begin{equation}\label{Complete estimate away from 0}
\Vert P_0u\Vert_{L^2(\BR^n)}^2=\int_{L}\frac{|\widehat{P_0f}(\xi,\tau)|^2}{(|\xi|^2-\tau^2)^2}d\xi\,d\tau+\int_{A\setminus L}\frac{|\widehat{P_0f}(\xi,\tau)|^2}{(|\xi|^2-\tau^2)^2}d\xi\,d\tau \lesssim \Vert P_0f\Vert_{L^{p}(\BR^n)}^2
\end{equation}
which is the desired estimate.
\end{proof}
Having established the bound on a single annulus, we seek to extend this into a bound on all of Euclidean space; this will allow us to solve the PDE. The idea is to use the homogeneity of the wave equation to deduce bounds on the dyadic dilates of the annulus $A$ and then to sum over all the pieces and appeal to the Littlewood-Paley Theorem. In order for the sum to converge, we will need to take an appropriate Riesz potential of the solution $u$. As in the proof of the previous proposition, we will take $f\in L^1\cap L^p$ for simplicity.
\begin{thm} 
Suppose $f\in L^p(\BR^n)$ where $n\geq 5$ and $1< p\leq \frac{2n}{n+4}$ and that its Fourier transform $\hat{f}$ vanishes on the cone $\{(\xi,\tau)\in\BR^{n-1}\times\BR:|\xi|^2-\tau^2=0\}$. Then the inhomogeneous wave equation
\begin{equation}
u_{tt}-\Delta_xu=f,
\end{equation}
where $\Delta_x$ denotes the Laplacian in the $n-1$ spatial variables, has a solution $u$ which is a tempered distribution whose Fourier transform coincides with a function, which is the unique solution with the property
\begin{equation}
    (-\Delta)^{\frac{1}{4}\left(n+4-\frac{2n}{p}\right)}u\in L^2(\BR^n),
\end{equation}
where $(-\Delta)^z$ denotes the Riesz potential. The Fourier transform of the Riesz potential of the solution is given by
\begin{equation}
\left[(-\Delta)^{\frac{1}{4}\left(n+4-\frac{2n}{p}\right)}u\right]^{\land}(\xi,\tau)=(|\xi|^2+\tau^2)^{\frac{1}{4}\left(n+4-\frac{2n}{p}\right)}\frac{\hat{f}(\xi,\tau)}{|\xi|^2-\tau^2}.
\end{equation}
In particular, if $p=\frac{2n}{n+4}$ then $u\in L^2$.
\end{thm}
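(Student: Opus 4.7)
The plan is to extend Proposition 2 from the unit annulus to all of Euclidean space by combining a Littlewood-Paley decomposition with the scaling homogeneity of the wave equation, using the Riesz potential to cancel the unfavorable powers of $2^j$ that arise at large $|j|$. As in the setup before Proposition 2, I would first reduce by density to $f \in L^1 \cap L^p$ with continuous $\hat f$ vanishing on the cone, and define $v := (-\Delta)^s u$ with $s = \tfrac{1}{4}(n+4-2n/p)$ directly on the Fourier side via the formula in the statement. The theorem then reduces to showing $v \in L^2(\BR^n)$, since $u$ is then recovered as a tempered distribution by inverting the Riesz potential.

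For the scaling step, the dilation $(\xi,\tau)\mapsto(2^{-j}\xi,2^{-j}\tau)$ carries the support of $\chi_j$ onto that of $\chi_0$, preserves the cone $g^{-1}(0)$, and rescales $|\xi|^2-\tau^2$ by $2^{2j}$. Tracking the Jacobian and the $L^p$ rescaling of $f$, Proposition 2 applied at scale $2^j$ yields
\begin{equation*}
\|P_j u\|_{L^2}^2 \lesssim 2^{-j(n+4-2n/p)} \|P_j f\|_{L^p}^2.
\end{equation*}
Since $(|\xi|^2+\tau^2)^s \approx 2^{2js}$ on the support of $\chi_j$, multiplying by $2^{4js}$ converts this into the scale-invariant bound $\|P_j v\|_{L^2} \lesssim \|P_j f\|_{L^p}$; the Riesz-potential exponent in the theorem is chosen precisely so that this cancellation is exact.

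Because the pieces $P_j v$ have almost disjoint Fourier supports, Plancherel together with almost-orthogonality gives $\|v\|_{L^2}^2 \approx \sum_{j \in \BZ} \|P_j v\|_{L^2}^2 \lesssim \sum_{j \in \BZ} \|P_j f\|_{L^p}^2$, and the final task is to dominate the right-hand side by $\|f\|_{L^p}^2$. I expect this to be the main technical obstacle, because for $p < 2$ the standard Littlewood-Paley square function inequality runs in the unfavorable direction, so one cannot simply interchange the $\ell^2$-sum and the $L^p$-norm. My approach would be to upgrade the scalar bound of Proposition 2 to a vector-valued inequality --- both the $(1,\infty)$ and $(2,2)$ estimates on the partial convolution operators in its proof admit vector-valued extensions, and Riesz-Thorin interpolation is preserved under this extension --- and then combine Khintchine's inequality with the Littlewood-Paley identity $\|f\|_{L^p} \approx \|(\sum_j |P_j f|^2)^{1/2}\|_{L^p}$ to collapse the resulting $\ell^2(L^p)$ sum into $\|f\|_{L^p}$. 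This is essentially the ``appeal to the Littlewood-Paley Theorem'' foreshadowed in the introduction.

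Uniqueness is comparatively clean. If $u_1$ and $u_2$ are two solutions satisfying the Riesz-potential hypothesis, their difference $w$ solves the homogeneous wave equation, so $\hat w$ is a tempered distribution supported on the cone $g^{-1}(0)$. On the other hand $\widehat{(-\Delta)^s w} = (|\xi|^2+\tau^2)^s \hat w$ is required to be a genuine $L^2$ function, and an $L^2$ function whose support sits inside a set of Lebesgue measure zero must vanish almost everywhere, so $w = 0$.
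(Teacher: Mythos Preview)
Your proposal is correct and follows the same route as the paper: rescale Proposition~2 to every dyadic annulus, absorb the resulting power $2^{-j(n+4-2n/p)}$ with the Riesz potential, and sum via Littlewood--Paley. The one place you diverge is the step you flag as ``the main technical obstacle,'' and here you have the direction of the inequality backwards. Since $p\le 2$, Minkowski's inequality for mixed norms gives directly
\[
\Bigl(\sum_{j\in\BZ}\|P_jf\|_{L^p}^2\Bigr)^{1/2}\;\le\;\Bigl\|\Bigl(\sum_{j\in\BZ} |P_jf|^2\Bigr)^{1/2}\Bigr\|_{L^p},
\]
and the right-hand side is $\lesssim \|f\|_{L^p}$ by the two-sided Littlewood--Paley theorem, valid for all $1<p<\infty$. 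This is exactly what the paper does in two lines. Your proposed detour through a vector-valued extension of Proposition~2 and Khintchine's inequality would also succeed, but it is unnecessary machinery; the case $p\le 2$ is the \emph{favorable} one for this interchange, not the unfavorable one. Your uniqueness argument is fine and is in fact more explicit than what the paper writes out.
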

\begin{proof}
The form that the Riesz potential of $u$ must take is immediate by taking the Fourier transform of the PDE. Now, we will use (\ref{Complete estimate on annulus}) to derive an estimate that is useful for all values of $j$. We can exploit the homogeneity of the wave operator by changing variables. First, let $f_j(x,t)=f(2^jx,2^jt)$; this is clearly also an $L^p$ function. It follows that
\begin{equation}
    P_{-j}f(x,t)=(P_0f_j)(2^{-j}x,2^{-j}t).
\end{equation}
Changing variables, we obtain the identity
\begin{equation}
\Vert P_{-j}f\Vert_{L^p(\BR^n)}^2=2^{2jn/p}\Vert P_0f_j\Vert_{L^p(\BR^n)}^2.
\end{equation}
Now, let $u_j$ be the proposed solution for the inhomogeneous data $f_j$, i.e.
\begin{equation}
\hat{u}_j(\xi,\tau)=\frac{\hat{f_j}(\xi,\tau)}{|\xi|^2-\tau^2}.
\end{equation}
Then by the properties of the Fourier transform for dilations, we have
\begin{equation}
\widehat{P_0u_j}(\xi,\tau)=2^{-(n+2)j}\frac{\hat{f}(2^{-j}\xi,2^{-j}\tau)}{|2^{-j}\xi|^2-(2^{-j}\tau)^2}\chi(\xi,\tau).
\end{equation}
From this formula, changing variables gives the identity
\begin{equation}
\Vert P_{-j}u\Vert_{L^2(\BR^n)}^2=2^{(n+4)j}\Vert P_0u_j\Vert_{L^2(\BR^n)}^2.
\end{equation}
Combining all of this, we get the estimate
\begin{equation}\label{Estimate near origin}
\Vert P_{j}u\Vert_{L^2(\BR^n)}^2\lesssim 2^{-j(n+4)}\Vert P_0f_{-j}\Vert_{L^p(\BR^n)}^2=2^{-j\left(n+4-\frac{2n}{p}\right)}\Vert P_{j}f\Vert_{L^p(\BR^n)}^2.
\end{equation}
Now, using the Littlewood-Paley Theorem, as described in Theorem 6.1.2 of \cite{MR3243734} for instance, we conclude
\begin{align}
    \Vert (-\Delta)^{\frac{1}{4}\left(n+4-\frac{2n}{p}\right)}u\Vert_{L^2}&\lesssim\left(\sum_{j\in\BZ}2^{j\left(n+4-\frac{2n}{p}\right)}\Vert P_j u\Vert_{L^2}^2\right)^{1/2} \\
    &\lesssim\left(\sum_{j\in\BZ}\Vert P_j f\Vert_{L^p}^2\right)^{1/2} \\
    &\lesssim\left\Vert\left(\sum_{j\in\BZ}|P_jf|^2\right)^{1/2}\right\Vert_{L^p}\\
    &\lesssim \Vert f\Vert_{L^p}.
\end{align}
This completes the proof.
\end{proof}
\section{Sharpness of Proposition 2}
In this, section, we will construct an example showing that the condition on the exponent $p$ where $p\leq \frac{2n}{n+4}$ is sharp on an annulus for $n\geq 5$. More specifically, we will show that the estimate
\begin{equation}\label{sharpness equation}
\Vert P_0u\Vert_{L^2(\BR^n)}^2\lesssim \Vert P_0f\Vert_{L^p(\BR^n)}^2
\end{equation}
fails for $p>\frac{2n}{n+4}$, where $P_0$ denotes the Littlewood-Paley projection from the previous section. To do this, it suffices to construct a family of functions $f_{\epsilon}$ with Fourier support on an annulus and estimate the dependence on $\epsilon$ as it tends to 0. \\
Define $\varphi_{a,b,c}:\BR\rightarrow \BR$ to be a bump function which is identically one on $(a,b)$ and vanishes outside $(a-c,b+c)$. Define the function $\hat{f}_1:\BR^n\rightarrow\BR$ by
\begin{equation}
\hat{f}_1(\xi_1,...,\xi_{n-1},\tau)=\varphi_{0.55,0.95,0.05}(\xi_1-\tau)\varphi_{1.1,1.9,0.1}(\xi_1+\tau)\prod_{j=2}^{n-1} \varphi_{0.1,0.9.0,1}(\xi_j)
\end{equation}
Under this definition, $\hat{f}_{1}$ is a smooth bump function supported on a rectangular prism which has one vertex at the point $(1,0,...,0,1)$, with sides of length 1 in the $\xi_1+\tau$ direction, the $\xi_1-\tau$ direction, and the $\xi_j$ direction for $j=2,...,n-1$. By construction, this vanishes on the cone where $|\xi|=|\tau|$. Now, we will scale this about the vertex at $(1,0,...,0,1)$ by translating to the origin, dilating, and translating back to the vertex $(1,0,...,0,1)$. Thus, if we let $\mathcal{T}$ represent the translation operation and $L_{\epsilon}$ the dilation operation, precisely the unique linear operator which dilates the argument of $f$ about the origin by a factor of $\epsilon$ in the directions $\xi_2,...,\xi_{n-1}$ and dilates by a factor of $\epsilon^2$ in the direction $\xi_1-\tau$ (note there is no dilation in the $\xi_1+\tau$ direction), then we can write
\begin{equation}
    \hat{f}_{\epsilon}=(\mathcal{T}^{-1}\circ L_{\epsilon}\circ\mathcal{T})(\hat{f}_{1/2})
\end{equation}
We have that $\Vert f_{1}\Vert_{L^p(\BR^n)}^2=C\approx 1$ since the inverse Fourier transform of a Schwartz function is Schwartz, and these have finite $L^p$ norms for all $p$.
Taking the Fourier inverse transform of $\hat{f}_{\epsilon}$, the translations give complex exponential factors with norm 1, and the dilation gives a factor of $\epsilon^n$ while dilating $f$ in the other direction. Specifically, this dilation operator is a linear transform $L_{\epsilon}$ with determinant $\epsilon^{-n}$ whose inverse is $L_{1/\epsilon}$, so we have
\begin{equation}
    |f_{\epsilon}(x,t)|=\epsilon^n|f_{1}(L_{1/\epsilon}(x,t))|.
\end{equation}
With this it is easy to compute by changing variables that
\begin{equation}
\Vert f_{\epsilon}\Vert_{L^p(\BR^n)}^2=\epsilon^{2n}\left(\epsilon^{-n}\int_{\BR^n}|f_{1}(x,t)|^p],dxdt\right)^{2/p}=\epsilon^{\frac{2n}{p'}}\Vert f_{1}\Vert_{L^p(\BR^n)}^2.
\end{equation}
Now, we will estimate $\Vert \hat{u}_{\epsilon}\Vert_{L^2(\BR^n)}^2$. We have that
\begin{equation}
    \Vert \hat{u}_{\epsilon}\Vert_{L^2(\BR^n)}^2=\int \frac{|\hat{f}_1(\xi,\tau)|^2}{|\xi_2^2+...+\xi_{n-1}^2+(\xi_1-\tau)(\xi_1+\tau)|^2}d\xi\,d\tau
\end{equation}
On the support of $\hat{f}_1$, we can bound $\xi_1+\tau$ between $1$ and $2$. Making the change of variables $u=\xi_1-\tau,v=\xi_1+\tau$ we get
\begin{equation}
    \Vert \hat{u}_{\epsilon}\Vert_{L^2(\BR^n)}^2\gtrsim \epsilon^{n-2}\int_{.55\epsilon^2}^{.95\epsilon^2}\frac{du}{(u+(n-2)\epsilon^2)^2}\approx \epsilon^{n-4}
\end{equation}
where we used the upper bound of $\epsilon^2$ for all the squared terms in the summation. Thus, for the estimate to hold, we would need
\begin{equation}
\epsilon^{n-4}\lesssim \epsilon^{\frac{2n}{p'}}
\end{equation}
Letting $\epsilon$ go to 0, this means that $n-4\geq\frac{2n}{p'}$ which is equivalent to the condition that $p\leq \frac{2n}{n+4}$. Hence, this condition is necessary for the inequality (\ref{sharpness equation}) to hold. \\
In fact, we can deduce another sharpness result using the same example. Letting $\epsilon$ go to infinity in the previous example, we see that estimates of the form
\begin{equation}\label{Final bound}
\Vert u \Vert_{L^2(\BR^n)}^2\lesssim \Vert f \Vert_{L^p(\BR^n)}^2
\end{equation}
are not possible unless $p\geq \frac{2n}{n+4}$. Thus, the only value of $p$ for which we get an estimate of the form (\ref{Final bound}) is when $p=\frac{2n}{n+4}$.

\bibliographystyle{unsrt}
\bibliography{main}

\end{document}